\newcommand{\beas}{\begin{eqnarray*}}
\newcommand{\eeas}{\end{eqnarray*}}
\newcommand{\bea}{\begin{eqnarray}}
\newcommand{\eea}{\end{eqnarray}}
\newcommand{\beq}{\begin{equation}}
\newcommand{\eeq}{\end{equation}}
\newcommand{\ben}{\begin{enumerate}}
\newcommand{\een}{\end{enumerate}}
\newtheorem{theorem}{Theorem}[section]
\newtheorem{lemma}[theorem]{Lemma}
\newtheorem{proposition}[theorem]{Proposition}
\newtheorem{conjecture}[theorem]{Conjecture}
\theoremstyle{definition}
\definecolor{darkblue}{rgb}{0,0,0.6}
\title[The Catalan case of Armstrong's conjecture on core partitions]{The Catalan case of Armstrong's conjecture on simultaneous core partitions}
\author[Richard P. Stanley]{Richard P. Stanley} \address{Department of Mathematics\\ MIT\\ Cambridge, MA 02139-4307}
\email{rstan@math.mit.edu}
\author[Fabrizio Zanello]{Fabrizio Zanello} \address{Department of Mathematical  Sciences\\ Michigan
  Tech\\ Houghton, MI  49931-1295} 
\email{zanello@mtu.edu}
\thanks{2010 {\em Mathematics Subject Classification.} Primary: 05A15;
  Secondary: 05A17, 05A19, 20M99.\\\indent 
{\em Key words and phrases.} Integer partition; core partition; Catalan number; numerical semigroup; Ferrers diagram; hook length.}  
\begin{document}

\begin{abstract}
A beautiful recent conjecture of D. Armstrong predicts the average
size of a partition that is simultaneously an $s$-core and a $t$-core,
where $s$ and $t$ are coprime. Our goal is to prove this
conjecture when $t=s+1$. These simultaneous $(s,s+1)$-core partitions, which are enumerated by Catalan
numbers, have average size $\binom{s+1}{3}/2$. 
\end{abstract}

\maketitle

\section{Introduction and some simple cases}

Let $\lambda=(\lambda_1,\lambda_2,\dots, \lambda_m)$ be a
\emph{partition of size $n$}, i.e., the $\lambda_i$ are weakly
decreasing positive integers summing to $n$. We can represent
$\lambda$ by means of its \emph{Young} (or \emph{Ferrers})
\emph{diagram}, which consists of a collection of left-justified rows
where row $i$ contains $\lambda_i$ cells. To each of these cells $B$
one associates its \emph{hook length}, that is, the number of cells in
the Young diagram of $\lambda$ that are directly to the right or below
$B$ (including $B$ itself).  Figure~\ref{1.1fig} represents the Young
diagram of the partition $\lambda=(5,3,3,2)$ of size 13; the number
inside each cell represents its hook length.

Let $s$ be a positive integer. We say that $\lambda$ is an
\emph{$s$-core} if $\lambda$ has no hook of length equal to $s$ (or
equivalently, equal to a multiple of $s$). For instance, from
Figure~\ref{1.1fig} we can see that $\lambda=(5,3,3,2)$ is an $s$-core
for $s=6$ and for all $s\ge 9$. Finally, $\lambda$ is an
\emph{$(s,t)$-core} if it is simultaneously an $s$-core and a $t$-core.

The theory of $(s,t)$-cores has  been the focus of much interesting
research in recent years (see \cite{And, AHJ, OS} for some of the main
results). In particular, when $s$ and $t$ are coprime, there exists
only a finite number of $(s,t)$-core partitions. In fact, there are
exactly $\binom{s+t}{s}/(s+t)$ such cores (see \cite{And}), the largest
of which has  size $(s^2-1)(t^2-1)/24$ \cite{OS}. More generally, a
nice result of J. Anderson \cite{And} provides  a bijective
correspondence between $(s,t)$-cores and order ideals of the poset of the positive integers that are not contained in
the numerical semigroup generated by $s$ and $t$, which we write as $P_{(s,t)}$. The partial order on
$P_{(s,t)}$ is determined by specifying that $a$ covers $b$ whenever
$a-b$ equals either $s$ or $t$. (Our poset terminology follows \cite[Chap.~3]{ec1}.) 

\begin{figure}
\centerline{\includegraphics[width=4cm]{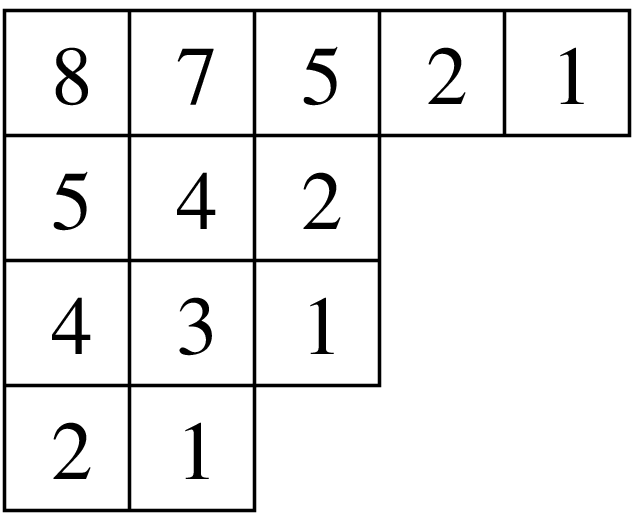}}
\caption{The Young diagram of $\lambda=(5,3,3,2)$. The number inside
  each cell indicates its hook length.} 
\label{1.1fig}
\end{figure}

For instance, let $s=3$ and $t=5$. Then $P_{(3,5)}=\{1,2,4,7\}$, where
$7>4>1$ and $7>2$. Figure~\ref{1.2fig}
represents the Hasse diagram of the poset $P_{(3,5)}$, rotated
45$^\circ$ counterclockwise from the usual convention. The order ideals of $P_{(3,5)}$ are the
following $\frac{1}{3+5}\binom{3+5}{3}=7$ subsets: $\emptyset, \{1\}, \{2\},
\{2,1\}$, $\{4,1\}$, $\{4,2,1\}$, and $\{7,4,2,1\}$.  Notice that
from this diagram it is clear that if an element $a$ of $P_{(3,5)}$
belongs to a given order ideal $I$, then all elements immediately to
the right or below $a$ also belong to $I$.

\begin{figure}
\centerline{\includegraphics[width=2cm]{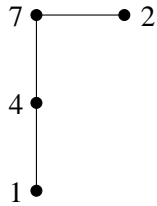}}
\caption{The Hasse diagram of the poset $P_{(3,5)}$.}
\label{1.2fig}
\end{figure}

Anderson's result then gives that $(s,t)$-cores correspond bijectively
to the order ideals of $P_{(s,t)}$ by associating  the ideal
$\{a_1,\dots, a_j\}$, where $a_1>\dots >a_j$, to the $(s,t)$-core
partition $(a_1-(j-1), a_2-(j-2), \dots, a_{j-1}-1, a_j)$. For
instance, the $(3,5)$-cores are the following seven partitions:
$\emptyset$ (corresponding to the order ideal $\emptyset$ of
$P_{(3,5)}$), $(1)$ (corresponding to $\{1\}$), $(2)$ (corresponding
to $\{2\}$), $(1,1)$ (corresponding to $\{2,1\}$), $(3,1)$
(corresponding to $\{4,1\}$), $(2,1,1)$ (corresponding to
$\{4,2,1\}$), and $(4,2,1,1)$ (corresponding to $\{7,4,2,1\}$). 

The following conjecture of D. Armstrong, informally stated sometime
in 2011 and then recently published in \cite{AHJ}, predicts, for
any $s$ and $t$ coprime, a surprisingly simple formula for the average
size of an $(s,t)$-core. 

\begin{conjecture}\label{arm}
For any coprime positive integers $s$ and $t$, the average size of an 
$(s,t)$-core is $(s+t+1)(s-1)(t-1)/24$. Equivalently, the sum of the
sizes of all $(s,t)$-cores is 
\begin{equation}\label{eee}
\frac{(s+t+1)(s-1)(t-1)}{24(s+t)}\binom{s+t}{s}.
\end{equation}
\end{conjecture}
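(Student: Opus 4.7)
The plan is to use Anderson's bijection to translate the claim into an identity about order ideals of $P := P_{(s,t)}$. If $I = \{a_1 > \cdots > a_j\}$ corresponds to the $(s,t)$-core $\lambda(I)$, then $|\lambda(I)| = \sum_{i=1}^j a_i - \binom{j}{2}$, so the total size of all $(s,t)$-cores equals
\[
A - B, \qquad A := \sum_{a \in P} a\,d(a), \qquad B := \sum_I \binom{|I|}{2},
\]
where $d(a)$ is the number of order ideals of $P$ containing $a$. The target reduces to proving
\[
A - B \;=\; \frac{(s+t+1)(s-1)(t-1)}{24(s+t)}\binom{s+t}{s}.
\]

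As a first step, I would collect the standard enumerative data on $P$: the cardinality $|P| = (s-1)(t-1)/2$, the total ideal count $N = \binom{s+t}{s}/(s+t)$, and the closed form
\[
\sum_{a \in P} a \;=\; \frac{(s-1)(t-1)(2st-s-t-1)}{12}
\]
for the sum of gaps of the numerical semigroup $\langle s, t\rangle$. Both $\sum_I |I|$ and $A$ itself should then be approachable by symmetry. Conjugation of partitions $\lambda \mapsto \lambda'$ is a size-preserving involution on $(s,t)$-cores, which under Anderson's bijection induces an involution on ideals of $P$ exchanging ``number of parts'' with ``largest part''; parsing how this involution acts on $d(a)$ should give a nontrivial linear relation that, combined with the closed forms above, pins down $\sum_I |I|$ and substantially constrains $A$.

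The main obstacle is the second-moment quantity $B$, equivalently $\sum_I |I|^2 = \sum_{a, b \in P} \#\{I \colon a, b \in I\}$, which requires fine structural information about the irregular poset $P_{(s,t)}$. The natural tool would be a $q$-analog of Anderson's theorem, that is, a closed formula for
\[
F_{s,t}(q) \;=\; \sum_{\lambda \text{ an } (s,t)\text{-core}} q^{|\lambda|},
\]
from which $F_{s,t}'(1)$ would deliver the total size directly. Rational-slope $q$-Catalan numbers in the sense of Gorsky--Mazin or Armstrong--Hanusa--Jones are natural candidates, but it is not clear whether any of the existing refinements tracks the size statistic uniformly in coprime $(s,t)$.

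Given these difficulties, I would first settle the Catalan specialization $t = s+1$, where $P_{(s, s+1)}$ is the triangular staircase poset whose order ideals are the $C_s$ Dyck paths. There, both moments of $|I|$ admit clean generating-function expressions, and direct evaluation of $A$ and $B$ should yield $A - B = \binom{s+1}{3}/2$. Verifying the formula in that case would already establish the main result of this paper; it would also isolate precisely which features of the staircase structure are used, highlighting the obstruction that must be overcome to prove Armstrong's conjecture for general coprime $(s, t)$.
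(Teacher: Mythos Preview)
The statement you are attempting is a \emph{conjecture}: the paper does not prove it for general coprime $(s,t)$, and your proposal correctly recognizes this, ultimately retreating to the Catalan case $t=s+1$. Your general discussion (the $A-B$ reformulation, the gap-sum formula, the conjugation involution, the appeal to a hypothetical $q$-analogue) is reasonable heuristics but, as you yourself say, does not close. So the only content to assess is your plan for the Catalan case.

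There the proposal has a genuine gap. You assert that for the staircase poset ``both moments of $|I|$ admit clean generating-function expressions, and direct evaluation of $A$ and $B$ should yield $A-B=\binom{s+1}{3}/2$.'' The second moment of $|I|$ (hence $B$) is indeed accessible via standard Dyck-path statistics, but $A=\sum_{a\in P}a\,d(a)$ is not: the weights $a$ are the specific integers labelling $P_{(s,s+1)}$ (the nongaps of $\langle s,s+1\rangle$), and there is no off-the-shelf formula for that weighted sum even after one knows each $d(a)$. The paper does \emph{not} evaluate $A$ and $B$ separately. Instead it partitions the order ideals of $T_s$ according to the least bottom-row element $i$ they omit; this splits every ideal into a fixed interval $\{1,\dots,i-1\}$ together with independent ideals in two smaller copies $T_{i-1}$ and $T_{s-i}$. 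The crucial observation is that on each sub-triangle the standard weight $\sigma$ decomposes as $\sigma+(s+1)\tau+(s-i+1)\rho$ (left) and $\sigma+i\tau+i\rho$ (right), where $\tau\equiv 1$ and $\rho$ is the rank function. This is what allows the recursion for $g_s(\sigma)=A-B$ to close in terms of $g_{i-1}(\sigma)$, $f_j(\tau)$, and $f_j(\rho)$; the resulting convolution identities (Lemmas~\ref{wz1} and~\ref{wz2}) are then verified by computer algebra. Your outline contains neither this decomposition nor any substitute for it, so as written it does not yield a proof even in the Catalan case.
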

  
For instance, the seven $(3,5)$-cores computed above are of size
$0,1,2,2,4,4,$ and $8$, with average size $3$, as predicted by
Armstrong's conjecture.  

One of the interesting aspects of this conjecture, besides the
partition-theoretic result that it predicts, is the extra combinatorial
information that it would imply on numerical semigroups generated by
two elements. In fact, even though one would generally expect these
semigroups to be very well understood, Armstrong's conjecture had
until now resisted all attempts of significant progress. 

The main goal of this paper is to show the conjecture in what is
probably its most interesting case, namely that of
$(s,s+1)$-cores. The number of these cores is the \emph{Catalan
  number} $C_s:=\frac{1}{s+1}\binom{2s}{s}$, and the corresponding posets
$P_{(s,s+1)}$ present a particularly nice structure, which will allow
us to use induction in the proof. 

We now wrap up this first section by briefly discussing Armstrong's
conjecture in a few initial cases. For any given $s$, in principle the
conjecture can be verified computationally for all $(s,t)$-cores,
given how explicitly one can determine these cores by means of
Anderson's bijection. In fact, the authors of \cite{AHJ} indicate that
C. Hanusa has verified the conjecture for small values of $s$, though
they provide no details in the paper. (We thank C. Hanusa for subsequently informing us that he had checked the conjecture on Mathematica for all $(s,ms+1)$-cores and $(s,ms-1)$-cores, when $s\le 10$.) We also wish to remark here that, since this paper has appeared as a preprint, lots of  work has already been done that has applied or extended our ideas. For instance, for a nice proof of the case $(s,ms+1)$ of Armstrong's conjecture, for arbitrary $s$ and $m\ge 1$, see \cite{Agg}, while for two interesting works on multiple simultaneous cores, see \cite{AL,YZZ}. Instead, for a complete proof of the analogous of Armstrong's conjecture for the case of self-conjugate $(s,t)$-cores (also stated in \cite{AHJ}), see \cite{CHW}.

In this section, we will just present a short proof of
the case $(3,t)$ of Armstrong's
conjecture (the case $(2,t)$ being trivial), which also gives us the
opportunity to state a simple but useful fact on arbitrary
$(s,t)$-cores that seems to have not yet been recorded in the
literature. We will provide this lemma without proof, since the
argument is analogous to the classical proof that if $\lambda$
is an $s$-core,  then it is also an $ms$-core, for all $m\ge 1$ (see
e.g. the first author's \cite{ec2}, Exercise 7.60 and its solution on
pp. 518--519).  In principle, the use of this lemma would considerably
simplify a ``brute-force'' proof for any given $s$, and indeed the
case $s=4$ is still  relatively quick to prove along the same lines;
nonetheless, for higher values of $s$ the computations remain
extremely unpleasant. 
 
\begin{lemma}\label{st}
If a partition $\lambda$ is an $(s,t)$-core, then it is also an
$(s,s+t)$-core. 
\end{lemma}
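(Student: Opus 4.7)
The plan is to adapt the classical hook-length argument showing that an $s$-core is automatically an $ms$-core for every $m \geq 1$, which is the model the author has flagged. The key combinatorial input is the beta-number description of hook lengths. Writing $\beta_i := \lambda_i + m - i$ for the first-column hook lengths of $\lambda$ (with $m$ the number of parts) and $B := \{\beta_1, \ldots, \beta_m\}$, the hook lengths in row $i$ are exactly $\{\beta_i - b : b \in \{0, 1, \ldots, \beta_i - 1\} \setminus B\}$.

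I first reduce to the claim that $\lambda$ has no hook of length exactly $s + t$. The classical equivalence the author alludes to---that a partition is an $r$-core iff it has no hook of length a positive multiple of $r$, which in turn holds iff it has no hook of length $r$---applied to $r = s + t$, together with the given $s$-core property of $\lambda$, would upgrade this claim to the desired $(s, s+t)$-core conclusion.

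I then exclude a hook of length $s + t$ by contradiction. Suppose such a hook sits in row $i$. Then $s + t = \beta_i - b$ for some $b \in \{0, \ldots, \beta_i - 1\} \setminus B$, and the shifted value $b' := b + s$ satisfies $b < b' < \beta_i$. Either $b' \notin B$, in which case $\beta_i - b' = t$ is forced to appear as a hook length in row $i$, contradicting the $t$-core hypothesis; or $b' = \beta_k$ for some $k > i$ (forced by $b' < \beta_i$ and the strict decrease of the $\beta$'s), in which case $b \in \{0, \ldots, \beta_k - 1\} \setminus B$ and so $\beta_k - b = s$ appears as a hook length in row $k$, contradicting the $s$-core hypothesis.

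The whole proof hinges on this symmetric case split: the ``extra'' $s + t$ in a hook length is broken into a step of size $s$, which either lands in a gap (producing a $t$-hook in the same row) or lands on a smaller beta number (producing an $s$-hook in a lower row), and the $(s,t)$-core hypothesis blocks both outcomes. The only genuine prerequisite is the beta-number formula for row hook lengths, and this is precisely the ingredient that drives the classical $s \Rightarrow ms$ argument that the author cites, so no new combinatorial machinery should be needed.
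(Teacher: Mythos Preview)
Your argument is correct and is exactly the adaptation the paper has in mind: the paper omits the proof entirely, referring only to the classical beta-number argument that an $s$-core is an $ms$-core (Exercise~7.60 of \cite{ec2}). Your case split---a hypothetical $(s+t)$-hook in row $i$ forces either a $t$-hook in the same row (if $b+s\notin B$) or an $s$-hook in a lower row (if $b+s=\beta_k$)---is precisely the intended extension of that argument.
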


\begin{proposition}
Armstrong's conjecture holds for all $(3,t)$-cores.
\end{proposition}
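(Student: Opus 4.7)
The plan is to apply Anderson's bijection directly, since for $s = 3$ the poset $P_{(3,t)}$ has a particularly transparent structure. Writing $t = 3k + r$ with $r \in \{1, 2\}$, the gaps of the numerical semigroup $\langle 3, t \rangle$ split by residue modulo $3$ into two disjoint chains, $A$ (residue $1$ gaps) and $B$ (residue $2$ gaps). Within each chain the cover relations come from differences of $3$, and the only extra covers come from differences of $t$; these connect a suffix of the longer chain to a prefix of the shorter one, with chain lengths $|A| = k$, $|B| = 2k$ when $r = 1$ and $|A| = 2k+1$, $|B| = k$ when $r = 2$.

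Because each residue class is a chain, every order ideal $I$ of $P_{(3,t)}$ is completely determined by the pair $(p, q) = (|I \cap A|, |I \cap B|)$, and the cross-edges translate into a single linear inequality between $p$ and $q$ (of the form $q \leq p + k$ when $r = 1$, and symmetrically when $r = 2$). Counting the valid pairs recovers the known enumeration $\binom{t+3}{3}/(t+3)$ of $(3,t)$-cores, providing a useful sanity check.

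I would then compute the size of the $(3,t)$-core attached to the ideal $\{a_1 > \cdots > a_j\}$ via Anderson's formula $\sum_i a_i - \binom{j}{2}$. Since $A$ and $B$ are arithmetic progressions of common difference $3$, both $\sum_{a \in I \cap A} a$ and $\sum_{b \in I \cap B} b$ are quadratic in $p$ and $q$, and after subtracting $\binom{p+q}{2}$ one obtains a compact quadratic expression in $(p, q)$. Summing this quadratic over the triangular region of valid pairs produces a quartic in $k$, which one matches against the value $(t+4)(t-1)(t+1)(t+2)/72$ predicted by Conjecture~\ref{arm} after substituting $t = 3k + r$.

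The ``main obstacle'' is essentially only bookkeeping: the cases $r = 1$ and $r = 2$ must be treated separately, since the chain lengths and the direction of the cross-edges are reversed, and the quadratic double summations must be carried out without arithmetic slips. Nothing conceptually deeper is required, which is exactly why the authors flag this as a warm-up; the same brute-force approach, though still tractable for $s = 4$, breaks down for larger $s$ once the number of residue chains grows and the cross-edge pattern becomes intricate, motivating the structural induction that will handle the Catalan case $(s, s+1)$.
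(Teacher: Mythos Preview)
Your outline is correct and would yield a valid proof, but it follows a different route from the paper's argument. The paper proceeds by induction on $t$ along each residue class modulo~$3$, using Lemma~\ref{st} to observe that every $(3,3n-2)$-core is automatically a $(3,3n+1)$-core; hence it suffices to identify the order ideals of $P_{(3,3n+1)}$ that are \emph{not} already ideals of $P_{(3,3n-2)}$, compute the sum of the corresponding core sizes, and check that this equals the predicted increment $\binom{3n+2}{3}$. Your approach instead parametrizes \emph{all} order ideals of $P_{(3,t)}$ at once by the pair $(p,q)$ of chain-intersection sizes and sums a closed-form quadratic over the resulting triangular region. Your method is more self-contained and arguably more transparent for $s=3$, while the paper's inductive method is designed to illustrate Lemma~\ref{st} and to foreshadow the inductive decomposition used in the Catalan case. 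Both arguments reduce to a routine polynomial verification of comparable difficulty.
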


\begin{proof} 
Let $s=3$. We will show Formula (\ref{eee}) for $t=3n-2$, the case $t=3n-1$ being entirely
similar. Notice that by Lemma \ref{st} all $(3,3n-2)$-cores are
$(3,3n+1)$-cores. Thus by induction, proving the result is now
equivalent to showing that the sum of the sizes of the
$(3,3n+1)$-cores that are \emph{not} also $(3,3n-2)$-cores is the
difference between the two total sums predicted by Armstrong's
conjecture, namely 
$$\Delta(n)= \frac{(3n+5)\cdot 2\cdot 3n}{24(3n+4)}\binom{3n+4}{3}-
\frac{(3n+2)\cdot 2\cdot
  (3n-3)}{24(3n+1)}\binom{3n+1}{3}=\binom{3n+2}{3}.$$ 

Figure~\ref{1.3fig} represents the Hasse diagrams of $P_{(3,10)}$ and
$P_{(3,13)}$. From these diagrams, we can see that the order
ideals of $P_{(3,13)}$ that are not also in $P_{(3,10)}$ are exactly
the six principal  ideals generated by 11, 14, 17, 20, 23, and 10,
plus the seven  ideals generated by $\{2, 10\}$, $\{5, 10\}$, $\{8,
10\}$, $\{11, 10\}$, $\{14, 10\}$, $\{17, 10\}$, and $\{20,10\}$. 

\begin{figure}
\centerline{\includegraphics[width=6cm]{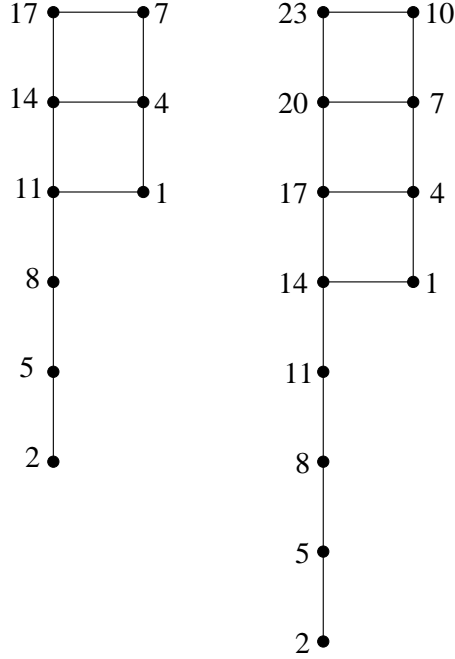}}
\caption{The Hasse diagrams of $P_{(3,10)}$ (on the left) and
  $P_{(3,13)}$.} 
\label{1.3fig}
\end{figure}

In a similar fashion, it can  be seen that the order ideals of
$P_{(3,3n+1)}$ but not of $P_{(3,3n-2)}$ are exactly the $n+2$
principal ideals generated by $3n-1$, $3n+2$, ..., $6n-1$, and $3n-2$,
and the $2n-1$ ideals generated by $\{2,3n-2\}$, $\{5,3n-2\}$, ...,
$\{6n-4,3n-2\}$.  

A standard computation now gives that $\Delta(n)$, i.e., the sum
of the elements of the above order ideals $I$ minus $\binom{\# I}{2}$,
where $\#I$ denotes the cardinality of $I$, is given by
$$\Delta(n)=(2+5+\dots  +(3n-1)) + \sum_{i=n}^{2n-1}[(2+5+\dots +(3i+2))+(1+4+\dots+3(i-n)+1)]$$$$+2n(1+4+\dots+(3n-2))+\sum_{i=0}^{2n-2}(2+5+\dots+(3i+2))$$$$-\left[\binom{n}{2}+\sum_{i=n}^{2n-1}\binom{2i-n+2}{2}+\sum_{i=0}^{2n-1}\binom{n+i}{2}\right].$$  

Showing now that the right-hand-side is equal to $\binom{3n+2}{3}$ is
a routine task that we omit. This completes the proof. 
\end{proof}

We only remark here that using Lemma \ref{st}, Armstrong's conjecture
can also be verified relatively quickly for $s=4$, i.e., for all
$(4,2n+1)$-cores (though the computations are of course already much
more tedious than for $s=3$). In fact, by  Formula (\ref{eee}), in this case one has to show
that the sum of the sizes of all $(4,2n+1)$-cores equals
$S(n):=(4n+6)\binom{n+3}{4}.$ It is easy to check (see also
\cite{A005585}) that, for all $n\ge 7$, the sequence $S(n)$ satisfies
the following curious recursive relation: 
$$\sum_{i=0}^6(-1)^{i}\binom{6}{i}S(n-i)=0.$$

It would be very interesting to combinatorially explain this identity
in the context of $(4,2n+1)$-cores, and thus give an elegant proof of
Armstrong's conjecture for $s=4$. 

\section{The Catalan case}

The goal of this section is to show Armstrong's conjecture for
$(s,s+1)$-cores. We denote by $T_s:=P_{(s,s+1)}$ the corresponding
poset according to Anderson's bijection \cite{And}. For simplicity, we
will draw the Hasse diagram of $T_s$ from top to bottom; thus, each
element of $T_s$ covers the two elements immediately below, and the
elements increase by $s$ at each step up and to the left, and by $s+1$
at each step up and to the right. (See Figure~\ref{2.1fig} for the
Hasse diagram of $T_5$.)

\begin{figure}
\centerline{\includegraphics[width=4cm]{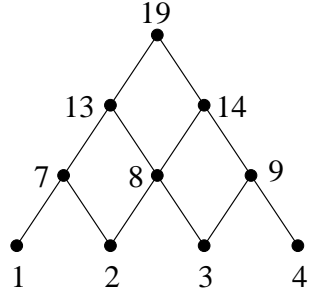}}
\caption{The Hasse diagram of $T_5$.}
\label{2.1fig}
\end{figure}

Let us define the  functions 
 \beas g_j & := & \frac{j(j-1)}{12}\binom{2j}{j},\\ 
  f_j & := & \frac{j^2+5j+2}{8j+4}\binom{2j+2}{j+1}-4^j,\\
  h_j & := & 2^{2j-1}-\binom{2j+1}{j}+\binom{2j-1}{j-1}, \eeas
where by convention we set  $h_0:=0$. We need the following two
identities. We thank Henry Cohn for verifying them for us on Maple. 

\begin{lemma}\label{wz1}
$$f_s=\sum_{i=1}^sC_{s-i}(2f_{i-1}+h_{i-1}).$$
\end{lemma}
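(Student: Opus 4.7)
The plan is to recast the identity as a generating-function equation and verify it algebraically. Write $F(x)=\sum_{j\ge 0}f_j x^j$, $H(x)=\sum_{j\ge 0}h_j x^j$, and let $C(x)=\sum_{j\ge 0}C_j x^j=(1-\sqrt{1-4x})/(2x)$. After the reindexing $i\mapsto i-1$, the right-hand side of Lemma~\ref{wz1} is $[x^{s-1}]\,C(x)\bigl(2F(x)+H(x)\bigr)$; since one checks immediately that $f_0=0$ and $h_0=0$, the lemma is equivalent to the single functional equation $F(x)=xC(x)\bigl(2F(x)+H(x)\bigr)$. Using the classical identity $1-2xC(x)=\sqrt{1-4x}$, this further simplifies to
$$2\sqrt{1-4x}\,F(x)=\bigl(1-\sqrt{1-4x}\bigr)H(x). \qquad (\ast)$$

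Next I would put $F$ and $H$ in closed form. A convenient simplification is $\binom{2j+2}{j+1}=\frac{2(2j+1)}{j+1}\binom{2j}{j}$, which yields $\binom{2j+2}{j+1}/(8j+4)=C_j/2$ and hence $f_j=\tfrac12(j^2+5j+2)C_j-4^j$. Therefore $F(x)$ is an explicit linear combination of $C(x)$, $xC'(x)$, $x(xC'(x))'$, and $1/(1-4x)$, each of which is a rational expression in $x$ and $u:=\sqrt{1-4x}$ (using $C'(x)=C(x)^2/u$, obtained by differentiating the Catalan functional equation $C=1+xC^2$). In parallel, $H(x)$ splits as $\sum_{j\ge 1}2^{2j-1}x^j-\sum_{j\ge 1}\binom{2j+1}{j}x^j+\sum_{j\ge 1}\binom{2j-1}{j-1}x^j$, and each of these three central-binomial-type series is directly computable in $u$ from $\sum_{j\ge 0}\binom{2j}{j}x^j=1/u$ by appropriate index shifts (e.g.\ $\binom{2j+1}{j}=\tfrac12\binom{2j+2}{j+1}$).

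With both sides of $(\ast)$ rewritten as explicit algebraic functions of $u$, the identity reduces, after clearing denominators, to a polynomial equation in $u$ that is routine to verify. The main obstacle is purely the algebraic bookkeeping: the $j^2$ coefficient in $f_j$ forces a second derivative of $C$ into $F$, producing pieces of order $(1-4x)^{-3/2}$, so keeping track of signs and fractional powers of $u$ across the many contributions is tedious. This is presumably why the authors delegated the final check to Maple; a conceptual alternative, more in the spirit of the label of the lemma, would be to apply Zeilberger's algorithm to both sides in order to produce a common linear recurrence in $s$ and then match a few initial values.
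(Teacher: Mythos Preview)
Your approach is correct and genuinely different from the paper's. The paper's ``proof'' consists of nothing more than a few lines of Maple code invoking \texttt{simplify} on the difference of the two sides; no human-readable argument is offered. You instead recast the recursion as the functional equation $F(x)=xC(x)\bigl(2F(x)+H(x)\bigr)$, then use $1-2xC(x)=\sqrt{1-4x}$ to reduce it to the explicit algebraic relation $(\ast)$ between $F$ and $H$, both of which you observe are rational in $u=\sqrt{1-4x}$ (with $x=(1-u^2)/4$). Your setup is sound: the convolution reindexing is right, the check $f_0=0$ is exactly what is needed to upgrade the coefficient identity for $s\ge 1$ to a full functional equation, and your simplification $f_j=\tfrac12(j^2+5j+2)C_j-4^j$ is correct. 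What your route buys over the paper's is an explanation of \emph{why} the identity holds---it is a relation among algebraic power series---and a verification that a reader could in principle carry out by hand; what the paper's route buys is that Maple dispatches in one line the bookkeeping you rightly flag (the $(1-4x)^{-3/2}$ contributions coming from differentiating $C$ twice). Your closing remark is apt: the label \texttt{wz1} does suggest the authors had a Wilf--Zeilberger certificate in mind, though the printed proof exhibits none.
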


\begin{proof}
This is the Maple code that verifies the identity (it gives 0 as output):\\
g := j -$>$ binomial(2*j,j)*j*(j-1)/12;\\
f := j -$>$ binomial(2*j+2,j+1)*(j$\hat{{\ }}$2+5*j+2)/(8*j+4)-4$\hat{{\ }}$j;\\
h := j -$>$ 2$\hat{{\ }}$(2*j-1)-binomial(2*j+1,j)+binomial(2*j-1,j-1);\\
C := j -$>$ binomial(2*j,j)/(j+1);\\
simplify(sum(C(s-i)*(2*f(i-1)+h(i-1)),i=2..s)-f(s));
\end{proof}

\begin{lemma}\label{wz2}
$$g_s=\sum_{i=1}^s2C_{s-i}g_{i-1}+2(s-i+1)C_{s-i}f_{i-1}+(s-i+3)C_{s-i}h_{i-1}+(i-1)C_{s-i}C_{i-1}-h_{s-i}h_{i-1}.$$
\end{lemma}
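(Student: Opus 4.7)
The plan is to verify this identity by the same computer-algebra strategy used for Lemma~\ref{wz1}: after substituting the explicit formulas for $g_j$, $f_j$, $h_j$, and $C_j$, each of the five convolution sums on the right-hand side becomes a hypergeometric sum in $i$, so Zeilberger's algorithm (as implemented in Maple's \texttt{SumTools[Hypergeometric]} package or Ekhad) applies to each piece. Combining the resulting closed forms and simplifying should return $g_s=s(s-1)\binom{2s}{s}/12$.

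A more structural alternative that I would pursue is a generating-function proof. Let $G(x)=\sum_{j\ge 0} g_j x^j$, $F(x)=\sum_{j\ge 0} f_j x^j$, $H(x)=\sum_{j\ge 0} h_j x^j$, and $C(x)=\sum_{j\ge 0} C_j x^j$. Using the identities $\sum_{j\ge 0}(j+1)C_j x^j = 1/\sqrt{1-4x}$ (to dispose of the weights $s-i+1$ and $s-i+3$) and $\sum_{j\ge 0} jC_j x^j = xC'(x)$ (to dispose of the weight $i-1$), the right-hand side of Lemma~\ref{wz2} is the coefficient of $x^{s-1}$ in
$$2C(x)G(x) + \frac{2F(x)+H(x)}{\sqrt{1-4x}} + 2C(x)H(x) + xC(x)C'(x) - H(x)^2,$$
while the left-hand side is the coefficient of $x^{s-1}$ in $G(x)/x$. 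Thus Lemma~\ref{wz2} is equivalent to this as an equality of formal power series.

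It then remains to verify the displayed identity algebraically. Setting $u=\sqrt{1-4x}$ (so that $x=(1-u^2)/4$), I obtain the closed forms $C(x)=2/(1+u)$, $G(x)=x^2/u^5$, $xC'(x)=(1-u)/[u(1+u)]$, and (after a short computation) $H(x)=(1-u)^2/[2u^2(1+u)]$, together with a similar rational expression for $F(x)$ obtained by using $(x^2C(x))''$ to produce the $j^2$ part of $f_j$. The whole identity then collapses to an equality in $\mathbb{Q}(u)$, which simplifies mechanically. The main obstacle, and the reason a direct combinatorial proof of Lemma~\ref{wz2} is not evident, is that the rational-function contributions coming from $4^j$ in $f_j$ and $2^{2j-1}$ in $h_j$ mix with the Catalan-number contributions and only cancel after all five convolutions are combined.
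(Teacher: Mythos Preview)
Your primary plan---computer verification via Zeilberger's algorithm on each convolution piece---is exactly what the paper does: its entire proof of Lemma~\ref{wz2} is a block of Maple code that symbolically simplifies the difference between the two sides to~$0$.

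Your generating-function alternative is a genuinely different and more transparent argument. The paper makes no attempt at a human-readable proof of the identity; it simply trusts Maple. Your translation of the five convolutions into the coefficient of $x^{s-1}$ in
\[
2C(x)G(x)+\frac{2F(x)+H(x)}{\sqrt{1-4x}}+2C(x)H(x)+xC(x)C'(x)-H(x)^2
\]
is correct (the key observation $(k+1)C_k=\binom{2k}{k}$ handles the weights $s-i+1$ and $s-i+3$ exactly as you say), and the closed forms $C(x)=2/(1+u)$, $G(x)=x^2/u^5$, $H(x)=(1-u)^2/\bigl(2u^2(1+u)\bigr)$ with $u=\sqrt{1-4x}$ check out. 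What this buys over the paper's approach is a verification that a human can in principle follow line by line, and it isolates where the $4^j$ and $2^{2j-1}$ contributions cancel; what the paper's approach buys is brevity, since the algebra in $\mathbb{Q}(u)$, while mechanical, is still unpleasant to write out in full once $F(x)$ is substituted.
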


\begin{proof}
This is the Maple code that verifies the identity (it gives 0 as output):\\
g := j -$>$ binomial(2*j,j)*j*(j-1)/12;\\
f := j -$>$ binomial(2*j+2,j+1)*(j$\hat{{\ }}$2+5*j+2)/(8*j+4)-4$\hat{{\ }}$j;\\
h := j -$>$ 2$\hat{{\ }}$(2*j-1)-binomial(2*j+1,j)+binomial(2*j-1,j-1);\\
C := j -$>$ binomial(2*j,j)/(j+1);\\
simplify(sum(2*C(s-i)*g(i-1)+2*(s-i+1)*C(s-i)*f(i-1)+(s-i+3)*C(s-i)*h(i-1) + (i-1)*C(s-i)*C(i-1)-h(s-i)*h(i-1),i=2..s-1)
+ 2*C(0)*g(s-1)+2*(1)*C(0)*f(s-1)+(3)*C(0)*h(s-1) + (s-1)*C(0)*C(s-1)
- g(s));
\end{proof}

\begin{theorem}\label{cat}
Armstrong's conjecture holds for all $(s,s+1)$-cores.
\end{theorem}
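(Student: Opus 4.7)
I would proceed by strong induction on $s$, with Lemmas \ref{wz1} and \ref{wz2} serving as the analytic identities that close the induction. Via Anderson's bijection, the theorem amounts to showing
\[
\sum_{I}\Bigl(\,\sum_{a\in I}a\;-\;\binom{|I|}{2}\,\Bigr) \;=\; g_s,
\]
where $I$ ranges over the $C_s$ order ideals of the triangular poset $T_s$ and $a$ denotes the numerical value of an element of $I$. My plan is to introduce two auxiliary weighted sums: $\widetilde h_s := \sum_I |I|$ (one quickly verifies in small cases that this matches the explicit $h_s$) and a second statistic $\widetilde f_s$ designed to fit the recursion of Lemma \ref{wz1}. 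Lemmas \ref{wz1} and \ref{wz2} then become the recursions to be proved for $\widetilde f_s$ and the size sum (once we know $\widetilde h_s = h_s$).

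The combinatorial core is a decomposition of the order ideals of $T_s$ mirroring the Catalan convolution $C_s = \sum_{i=1}^s C_{i-1}C_{s-i}$. For each nonempty $I$ I would extract a split index $i \in \{1,\ldots,s\}$ --- for instance, the column index of the leftmost bottom-row element of $I$ --- and show that ideals with split index $i$ are in bijection (possibly modulo a controlled inclusion-exclusion correction) with pairs of ideals sitting inside isomorphic copies of $T_{i-1}$ and $T_{s-i}$ embedded in $T_s$. Since the embedding $T_{s-i} \hookrightarrow T_s$ shifts numerical values by amounts linear in $s$ and $i$, accumulating these shifts across the ``$T_{s-i}$-piece'' produces the coefficients $2(s-i+1)$, $(s-i+3)$, and $(i-1)$ visible in Lemma \ref{wz2}. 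The asymmetry of that lemma --- only the $T_{i-1}$-side contributes $g$, $f$, $h$, whereas the $T_{s-i}$-side contributes only $C_{s-i}$ and $h_{s-i}$ --- dictates which sub-ideal is the ``base'' and which is the ``free factor.'' Meanwhile the $-\binom{|I|}{2}$ term in the size formula couples the two sub-ideals' cardinalities quadratically, producing the cross-term $(i-1)C_{s-i}C_{i-1}$ and the correction $-h_{s-i}h_{i-1}$. After verifying a small base case, the three weighted sums satisfy exactly the recursions of Lemmas \ref{wz1} and \ref{wz2}, and the induction closes.

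The main obstacles are pinpointing the exact pivot whose decomposition is a genuine (or genuinely-corrected) bijection compatible with the value structure of $T_s$, and carefully tracking how $\sum_{a\in I}a$ and $\binom{|I|}{2}$ redistribute across the two sub-ideals. The latter, in particular, is responsible for both the linear shifts and the quadratic cross-terms that make Lemma \ref{wz2} look combinatorially opaque until the decomposition is spelled out.
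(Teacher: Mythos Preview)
Your strategy is essentially the paper's: a Catalan decomposition of the order ideals of $T_s$, three auxiliary statistics matching $g_j$, $f_j$, $h_j$, and induction closed by Lemmas~\ref{wz1} and~\ref{wz2}. Two concrete details you leave open are precisely the ones the paper supplies, and your guesses for them are off.

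First, the pivot. Your suggestion ``the leftmost bottom-row element of $I$'' does not give a clean split. The paper takes $i$ to be the least rank-$0$ element \emph{not} in $I$ (with $i=s$ if all of $1,\dots,s-1$ lie in $I$). Then $1,\dots,i-1$ are forced in, $i$ and everything above it are forced out, and what remains is an \emph{exact} product $J(T_{i-1})\times J(T_{s-i})$: the copy of $T_{i-1}$ sits above $1,\dots,i-1$ starting at rank $1$, and the copy of $T_{s-i}$ sits on $i+1,\dots,s-1$. No inclusion-exclusion correction is needed.

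Second, the mystery statistic $\widetilde f_s$. In the paper this is $f_s(\rho)=\sum_I\sum_{a\in I}\rho(a)$, the total \emph{rank} summed over all ideals. The reason it enters is that under the two embeddings the standard weight $\sigma$ transforms as $\sigma+(s+1)\tau+(s-i+1)\rho$ on the $T_{i-1}$-side and $\sigma+i\tau+i\rho$ on the $T_{s-i}$-side; the $\rho$-terms force you to track $f_j(\rho)$ alongside $g_j(\sigma)$ and $h_j=f_j(\tau)$. The recursion for $f_s(\rho)$ derived from the same decomposition is exactly Lemma~\ref{wz1}.

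Finally, the factor $2$ in front of $C_{s-i}g_{i-1}$ and $C_{s-i}f_{i-1}$ in Lemma~\ref{wz2} does not reflect an asymmetry between the two sides; it comes from reindexing $i\mapsto s-i+1$ to combine the symmetric contributions of the $T_{i-1}$- and $T_{s-i}$-pieces into a single sum.
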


\begin{proof}
For any given $s$, and for any weight function $w: T_s \rightarrow
{\mathbb Z}$, define the two functions 
$$f_s(w):=\sum_{I\in J(T_s)}\sum_{a\in I}w(a),$$
$$g_s(w):=\sum_{I\in J(T_s)}\left(\sum_{a\in I}w(a)-\binom{\# I}{2}\right)=f_s(w)-\sum_{I\in J(T_s)}\binom{\# I}{2},$$
where as usual $J(P)$ denotes the set of order ideals of a poset $P$.

We consider three weight functions on $T_s$. The weight $\sigma$ is
the ``standard weight'' that associates, to each element of $T_s$,
itself as a weight; i.e., $\sigma(a)=a$, for all $a\in T_s$. The
weight $\tau$ is identically 1; i.e., $\tau(a)=1$, for all $a\in
T_s$. Finally, $\rho$ records the ranks of the elements of $T_s$, when
we see this latter as a ranked poset whose minimal elements have rank
0.  Figure~\ref{2.2fig} represents the Hasse diagrams of $T_5$, where
the elements are being weighted according to $\tau$ and $\rho$. 

\begin{figure}
\centerline{\includegraphics[width=10cm]{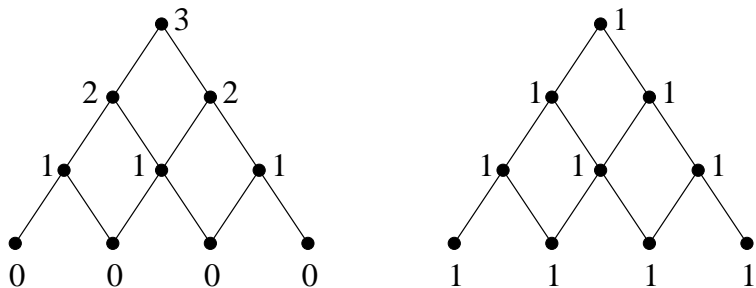}}
\caption{The poset $T_5$ with weights $\rho(a)$ on the left and
  $\tau(a)$ on the right.} 
\label{2.2fig}
\end{figure}

Showing Armstrong's conjecture for $(s,s+1)$-cores in the form of Formula (\ref{eee}) is
tantamount to proving that 
$$ g_s(\sigma)=g_s=\frac{s(s-1)}{12}\binom{2s}{s}. $$

Notice that the elements of rank 0 of $T_s$ are $1, 2, \dots,s-1$. We
can partition $J(T_s)$ as $J(T_s)=\bigcup_i J_i(T_s)$, where $J_i(T_s)$
is the set of those order ideals of $T_s$ whose least element that
they do \emph{not} contain is $i$. Notice that either $1\le i\le s-1$,
or we are considering order ideals whose least missing element $i$ (if
any) has positive rank. With some abuse of notation, in this latter
case we set by convention $i:=s$, so that  we can write
$$J(T_s)=\bigcup_{i=1}^sJ_i(T_s).$$

Notice that, given $i$, the elements $I$ of $J_i(T_s)$ \emph{must}
contain all of $1, 2, \dots ,i-1$, \emph{cannot} contain any element
covering $i$ (this is an empty condition for $i=s$), and \emph{may or
  may not} contain any other element.  Figure~\ref{2.3fig} gives the
Hasse diagram of $T_{10}$; for $i=5$, it indicates by squares the elements
of $T_{10}$ that must belong to any given order ideal $I\in
J_5(T_{10})$, by open circles the elements that cannot be in $I$, and by solid circles the elements that may or may not be in $I$.

\begin{figure}
\centerline{\includegraphics[width=12cm]{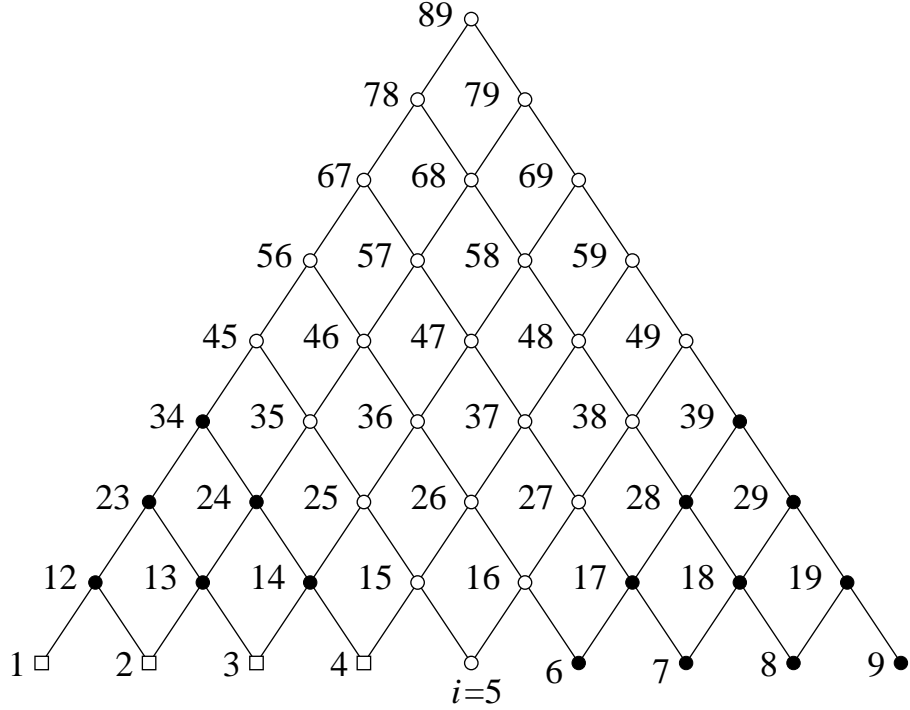}}
\caption{The possible elements of the order ideals $I\in J_5(T_{10})$. Elements that must appear in $I$ are indicated by
  squares, that cannot appear by open circles, and that may or may not  appear by solid circles.}
\label{2.3fig}
\end{figure}

It follows that any given order ideal $I\in J_i(T_{s})$ can be
partitioned into the disjoint union of two order ideals, say $I_1$ and
$I_2$, plus the elements $1, 2, \dots, i-1$. Notice that, in the Hasse
diagram of $T_s$, $I_1$ belongs to a poset that is isomorphic to
$T_{i-1}$ and sits to the left of $i$ (starting in rank one), and
$I_2$  belongs to a poset that is isomorphic to $T_{s-i}$ and sits to
the right of $i$. The posets $T_1$ and $T_0$, which arise when $i=1$, $i=2$, $i=s-1$, or $i=s$, are empty. (See again
Figure~\ref{2.3fig} for the case $n=10$ and $i=5$.)

Given this, it is a simple exercise to show that the sum of the
elements of $T_s$ that belong to a given order ideal $I=I_1\cup I_2
\cup \{1,2,\dots, i-1\}\in J(T_s)$ is given by:
$$\sum_{a\in I} \sigma(a)=\sum_{a\in I_1} w(a)+ \sum_{a\in I_2}w(a)+ \binom{i}{2},$$
where the weight function $w$ is defined as
$$w:= \sigma + (s+1)\tau + (s-i+1)\rho$$
over $I_1$, and by
$$w:= \sigma + i\tau + i\rho$$
over $I_2$. Further, notice that, given $i$, we can choose the
order ideals $I_1\in J(T_{i-1})$ and $I_2\in J(T_{s-i})$
independently. Therefore, the elements $a\in I_1$ will appear a total
of $C_{s-i}$ times in the order ideals $I$ of $T_s$, and similarly,
the elements $a\in I_2$ will appear a total of $C_{i-1}$ times in the
order ideals $I$ of $T_s$. 

Therefore, the contribution of any given
$i$ to the desired function $g_s(\sigma)$ is given by

\begin{equation}\label{iii}
m(i,s) -\sum_{I_1\in J(T_{i-1}), I_2\in J(T_{s-i})}\binom{\# I_1+\# I_2   +i-1}{2},
\end{equation}
where we have 

\begin{center}
$$m(i,s):=\sum_{I_1\in J(T_{i-1})}C_{s-i}\left(\sum_{a\in
  I_1}w(a)+\binom{i}{2}\right) + \sum_{I_2\in
  J(T_{s-i})}C_{i-1}\sum_{a\in I_2}w(a)$$$$=C_{s-i}(f_{i-1}(\sigma)+(s+1)f_{i-1}(\tau)+
(s-i+1)f_{i-1}(\rho))$$$$+C_{s-i}C_{i-1}\binom{i}{2} +C_{i-1}(f_{s-i}(\sigma)+if_{s-i}(\tau)+if_{s-i}(\rho)).$$ 
\end{center}

Let us now consider, again for a fixed $i$, the term that is being subtracted in Formula (\ref{iii}). Notice that
$$\binom{\# I_1+\# I_2 +i-1}{2}= \binom{\# I_1}{2}+\binom{\#
  I_2}{2}+(i-1)\# I_1 +(i-1)\# I_2+(\# I_1)(\# I_2)+\binom{i-1}{2}.$$ 
  
Thus, once we sum over all $I_1$ and $I_2$, similar considerations to
the above on the number of such order ideals give us that 

\begin{center}
$$ \sum_{I_1\in J(T_{i-1}), I_2\in J(T_{s-i})}\binom{\# I_1+\# I_2
  +i-1}{2}= \sum_{I_1\in J(T_{i-1})}C_{s-i}\left(\binom{\#
  I_1}{2}+(i-1)\# I_1\right) $$
  $$+\sum_{I_2\in
  J(T_{s-i})}C_{i-1}\left(\binom{\# I_2}{2}+(i-1)\#
I_2\right)+\left(\sum_{I_1\in J(T_{i-1})}\#
I_1\right)\left(\sum_{I_2\in J(T_{s-i})}\#
I_2\right)+C_{s-i}C_{i-1}\binom{i-1}{2}.$$
\end{center}

Essentially by definition, we have $\sum_{I_1\in J(T_{i-1})}\#
I_1=f_{i-1}(\tau)$, and likewise, $\sum_{I_2\in J(T_{s-i})}\#
I_2=f_{s-i}(\tau)$. Also, it is a known fact (see e.g. \cite{A006419}) that  the function $f_j(\tau)$ appearing in the above formula for $m(i,s)$ satisfies 
$$ f_j(\tau)=2^{2j-1}-\binom{2j+1}{j}+\binom{2j-1}{j-1}. $$

As for determining $f_j(\rho)$,  by employing the above decomposition
of the order ideals $I$ and summing over all $i$, with a similar
argument we can see that:
$$ f_s(\rho)= \sum_{i=1}^s C_{s-i}(f_{i-1}(\rho)+f_{i-1}(\tau))+C_{i-1}f_{s-i}(\rho),$$ 
which, by rearranging the indices, yields:
$$f_s(\rho)=\sum_{i=1}^s C_{s-i}(2f_{i-1}(\rho)+f_{i-1}(\tau)).$$

Therefore, by induction, if we apply Lemma \ref{wz1} with
$f_j=f_j(\rho)$ and $h_j=f_j(\tau)$, we promptly get the following
formula for $f_j(\rho)$: 
$$f_j(\rho)=\frac{j^2+5j+2}{8j+4}\binom{2j+2}{j+1}-4^j.$$

Finally, notice that $g_{i-1}(\sigma)=f_{i-1}(\sigma)-\sum_{I_1\in
  J(T_{i-1})}\binom{\# I_1}{2},$  and similarly for $g_{s-i}(\sigma)$. 

Therefore, by Formula (\ref{iii}) and the subsequent formula for $m(i,s)$, if we sum over  $i=1,2, \dots, s$,
after some tedious but routine computations (that include
rearranging the indices where necessary)  we obtain: 
\begin{center}
 $$ g_s(\sigma)= \sum_{i=1}^s
 [2C_{s-i}g_{i-1}(\sigma)+2(s-i+1)C_{s-i}f_{i-1}(\rho)$$
 $$+(s-i+3)C_{s-i}f_{i-1}(\tau)+(i-1)C_{s-i}C_{i-1}-f_{i-1}(\tau)f_{s-i}(\tau)].$$ 
\end{center}

The theorem now follows by induction on $s$, if we apply Lemma
\ref{wz2} with $f_j=f_j(\rho)$, $g_j=g_j(\sigma)$, and
$h_j=f_j(\tau)$. 
\end{proof}

%
%

\section{Acknowledgements} The second author warmly thanks the first for his hospitality during calendar year 2013. The two authors wish to
thank the referee for a careful reading of their manuscript and several comments that improved the presentation, and Henry Cohn for verifying the identities of Lemmas \ref{wz1} and \ref{wz2} on the computer package Maple. The present work was done while the first author was partially supported by the National Science Foundation under Grant No.~DMS-1068625, and the second author by a Simons Foundation grant (\#274577). 



\begin{thebibliography}{ll}

\bibitem{A005585} A005585, {On-Line Encyclopedia of Integer
  Sequences}, \href{http://oeis.org/A005585}{http://oeis.org/A005585}.

\bibitem{A006419} A006419, {On-Line Encyclopedia of Integer
  Sequences}, \href{http://oeis.org/A006419}{http://oeis.org/A006419}.

\bibitem{Agg} A. Aggarwal: \emph{Armstrong's conjecture for $(k,mk+1)$-core partitions}, preprint. Available on the \href{http://arxiv.org/abs/1407.5134}{arXiv}.

\bibitem{AL} T. Amdeberhan and E. Leven: \emph{Multi-cores, posets, and lattice paths}, preprint.  Available on the \href{http://arxiv.org/abs/1406.2250}{arXiv}.

\bibitem{And} J. Anderson: \emph{Partitions which are simultaneously $t_1$-
  and $t_2$-core}, Discrete Math.\ \textbf{248} (2002),  237--243. 

\bibitem{AHJ} D. Armstrong, C.R.H. Hanusa, and B. Jones: \emph{Results and
  conjectures on simultaneous core partitions}, European J. Combin. \textbf{41} (2014), 205--220.

\bibitem{CHW} W.Y.C. Chen, H.H.Y. Huang, and L.X.W. Wang: \emph{Average size of a self-conjugate $(s,t)$-core partition}, preprint. Available on the \href{http://arxiv.org/abs/1405.2175}{arXiv}.

\bibitem{OS} J. Olsson and D. Stanton: \emph{Block inclusions and cores of
  partitions}, Aequationes Math. \textbf{74} (2007), 90--110.  

\bibitem{ec1} R.\,P. Stanley, ``Enumerative Combinatorics'',
  vol.~1, second ed., Cambridge University Press, Cambridge (2012). 

\bibitem{ec2} R.\,P. Stanley, ``Enumerative Combinatorics'',
  vol.\ 2, Cambridge University Press, Cambridge (1999). 

\bibitem{YZZ} J.Y.X. Yang, M.X.X. Zhong, and R.D.P. Zhou: \emph{On the enumeration of $(s,s+1,s+2)$-core partitions}, preprint.  Available on the \href{http://arxiv.org/abs/1406.2583}{arXiv}.

\end{thebibliography}
\end{document}